\theoremstyle{plain}
\newtheorem{defn}{Definition}[section]
\newtheorem{Thm}[defn]{Theorem}
\newtheorem{Prop}[defn]{Proposition}
\theoremstyle{remark}
\DeclareMathAlphabet{\mathdj}{U}{msb}{m}{n}
\begin{document}
\title{A note on exact Lagrangian cobordisms with disconnected Legendrian ends}
\author{Baptiste Chantraine}
\address{Universit\'e de Nantes, France.}
\email{baptiste.chantraine@univ-nantes.fr}
\begin{abstract}
 We provide in this note two relevant examples of Lagrangian cobordisms. The first one gives an example of two exact Lagrangian submanifolds which cannot be composed in an exact fashion. The second one is an example of an exact Lagrangian cobordism on which all primitive of the Liouville form is not constant on the negative end and such that the positive end is a stabilisation whereas the negative end admits augmentations. These examples emphasise point (i) of the definition of exact Lagrangian cobordisms in \cite{Ekhoka}. In order to provide such examples we construct Lagrangian immersions with single double points using an explicit model and interpret such Lagrangians as cobordisms from the Hopf link.
\end{abstract}
\maketitle

\section{Introduction}
\label{sec:introduction}

Let $\Lambda^-$ and $\Lambda^+$ be two compact Legendrian submanifolds of a co-oriented contact manifold $(M,\xi=\ker\alpha)$. Recall that an exact Lagrangian cobordism from $\Lambda^-$ to $\Lambda^+$ is a proper Lagrangian submanifold $L$ of $(\mathbb{R}\times M,d(e^t\alpha))$ satisfying:
\begin{enumerate}
\item $L\cap (-\infty,-T)\times M=(-\infty,-T)\times\Lambda^-$.\label{item:1}
\item $L\cap (T,\infty)\times M=(T,\infty)\times\Lambda^+$.\label{item:2}
\item There exists a function $f$ satisfying $e^t\alpha\vert_{L}=df$ and such that $f^-:=f\vert_{(-\infty,-T)\times\Lambda^-}$ and $f^+:=f\vert_{(T,\infty)\times\Lambda^+}$ are constant.\label{item:3}
\end{enumerate}

Basic properties of this relation were studied in \cite{chantraine_conc}. In \cite{Ekhoka} it is shown that such a cobordism induces a map $\phi_L:\mathcal{A}(\Lambda^+)\rightarrow \mathcal{A}(\Lambda^-)$ where $\mathcal{A}$ denotes the Chekanov-Eliashberg algebra as in \cite{Chekanov_DGA_Legendrian}, \cite{Ekholm_Contact_Homology} and \cite{LCHgeneral}. This map satisfies the functorial property that if $L_1$ and $L_2$ are exact Lagrangian cobordisms then $\phi_{L_1\circ L_2}=\phi_{L_1}\circ\phi_{L_2}$ in homology.

The aim of this note is to emphasise the importance of the third point in the definition (which is condition (i) of \cite[Definition 1.1]{Ekhoka}) in order for these results to be true. Note that from (\ref{item:1}) and (\ref{item:2}) of the definition one can see that the functions are locally constant at $\pm\infty$ and thus problems occur when the involved negative end is disconnected. A first problem that can occur is that if we remove the third condition exact cobordisms do not form a category (one cannot compose morphisms in an exact fashion). More dramatically, without such a condition one cannot prevent the existence of holomorphic curves as in Figure \ref{fig:holom} and thus the map $\phi_L$ is ill-defined. We provide explicit examples of those two phenomena in Section \ref{sec:expl-lagr-cobord}.

\begin{figure}[!h]
  \centering
  \includegraphics[height=3cm]{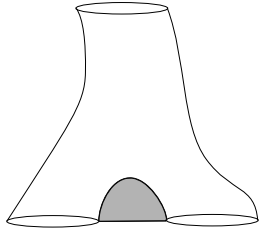}
  \caption{Holomorphic half disk on an exact Lagrangian cobordism.}
  \label{fig:holom}
\end{figure}

In what follows we consider only two contact manifolds: $(S^3,TS^3\cap i\cdot TS^3)\subset\mathbb{C}^2\simeq\mathbb{R}^4$ and $(\mathbb{R}^3,\ker(dz-ydx))$. We also fix a contactomorphism $f:\mathbb{R}^3\rightarrow S^3\setminus\{(0,\frac{\sqrt{2}}{2},0,\frac{\sqrt{2}}{2})\}$ as in \cite[Proposition 2.1.8]{Geiges_contact}.

The symplectisation of $S^3$ is naturally $\mathbb{R}^4\setminus\{0\}$ with the standard symplectic form $\omega_0$. The symplectisation of $\mathbb{R}^3$ is $\mathbb{R}\times\mathbb{R}^3$ with the symplectic form $d(e^t(dz-ydx))$ which is symplectomorphic to $T^*(\mathbb{R}^*_+\times\mathbb{R})$ with its standard symplectic form via
\begin{align}
  \label{eq:1}
  (t,x,y,z)\rightarrow (e^t,x,z,e^t\cdot y)=(q,s,p_1,p_2).
\end{align}

It will be important to remember that the second cotangent direction is expanding under the symplectomorphism \eqref{eq:1}. The contactomorphism $f$ induces a symplectomorphism $\widetilde{f}$ from $T^*(\mathbb{R}^*_+\times\mathbb{R})$ to $\mathbb{R}^4\setminus\{(0,t,0,t)\vert t\in\mathbb{R}_+^*\}$. In the following $(Y,\xi)$ will denote any of those two contact manifolds and $S(Y,\xi)$ will denote its symplectisation where it is understood that the symplectic form is the standard one under the appropriate identification. We denote by $\Lambda_0$ the trivial Legendrian knot parametrised in $\mathbb{R}^3$ by $(\cos\theta,-3\sin\theta\cos\theta,\sin^3\theta)$ and by $H$ the Hopf Legendrian link $\Lambda_0\cup (\Lambda_0+\varepsilon\frac{\partial}{\partial t})$. We also denote by $S^+$ and $S^-$ the positive and negative stabilisations of Legendrian links.

Among the examples we provide the most disturbing at first sight is a cobordism whose negative end is a stabilisation and whose positive end admits augmentations.

\begin{Prop}\label{sec:prop}
  There exists an oriented Lagrangian cobordism $L$ from $H$ to $S^+(S^-(\Lambda_0))$ such that the Liouville form $e^t\alpha$ is exact on $L$.
\end{Prop}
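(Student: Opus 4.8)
The plan is to obtain $L$ by a local surgery, along the Legendrian Hopf link, performed on an immersed exact Lagrangian \emph{filling} of $S^+(S^-(\Lambda_0))$ carrying a single transverse double point: the surgery excises a Weinstein ball around the double point, and the link of that point becomes the new negative end $H$. \textbf{Step 1 (the immersed model).} From the explicit local picture of a cusp of a front sweeping once across a neighbouring sheet, I would first build an immersed exact Lagrangian disk $\Sigma$ in the ball $B^4\subset\mathbb{C}^2$ with $\partial\Sigma=S^+(S^-(\Lambda_0))\subset S^3$ (placed there via $f$) and with exactly one transverse double point $p$; the value $tb\bigl(S^+(S^-(\Lambda_0))\bigr)=-3$ is precisely the Thurston--Bennequin number carried on the boundary of an immersed Lagrangian disk with one double point, which is why this particular double stabilisation is used. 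One concrete way to produce $\Sigma$ is to glue onto the top of the standard embedded Lagrangian $2$-disk $D_0$ filling $\Lambda_0$ an immersed exact Lagrangian cobordism from $\Lambda_0$ to $S^+(S^-(\Lambda_0))$, realised in the Lagrangian projection to $\mathbb{R}^*_+\times\mathbb{R}$ as an explicit one-parameter family of curves that is embedded except at one parameter value, where two branches cross once. Because $\Sigma$ is Lagrangian, $e^t\alpha\vert_\Sigma$ is closed, and because $\Sigma$ is a disk it is automatically exact; I fix a primitive $f$, imposing \emph{no} normalisation at the boundary.

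\textbf{Step 2 (surgery along the Hopf link).} By the Weinstein Lagrangian neighbourhood theorem pick a Darboux ball $U\ni p$ in which the two local sheets of $\Sigma$ are the flat Lagrangian planes $\{y_1=y_2=0\}$ and $\{x_1=x_2=0\}$ of $\mathbb{C}^2$, meeting at the origin. Delete a small round ball $B_\epsilon\subset U$ about $p$. Then $\Sigma\cap\partial B_\epsilon$ is the union of the unit circles of these two planes --- two Legendrian unknots, linked as the standard Legendrian Hopf link --- and near $\partial B_\epsilon$ the surface $\Sigma\setminus B_\epsilon$ is exactly the trivial cylinder over that link. The shell $B^4\setminus B_\epsilon$, with its exact symplectic structure (adjusted near $\partial B_\epsilon$ so that the Liouville field is radial about $p$ there), is a Liouville cobordism from $(S^3,\xi)$ to $(S^3,\xi)$ whose completion is $S(S^3,\xi)$; inside that completion, $\Sigma\setminus B_\epsilon$ extended by trivial cylinders over its two ends is a proper Lagrangian cobordism from the Hopf link to $S^+(S^-(\Lambda_0))$. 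Transporting through $\widetilde f$ into $S(\mathbb{R}^3,\xi)$, and recording --- via the expanding second cotangent direction of \eqref{eq:1} --- that the model seats one sheet above the other, identifies the negative end with $H=\Lambda_0\cup(\Lambda_0+\varepsilon\frac{\partial}{\partial t})$. Call the result $L$.

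\textbf{Step 3 (verification).} As $\Sigma$ is a disk it is orientable; an orientation of $\Sigma$ orients $L$ (a pair of pants) and both its Legendrian ends, so $L$ is an oriented cobordism from $H$ to $S^+(S^-(\Lambda_0))$, realising the configuration of the Introduction: its negative end $H$ admits augmentations, while its positive end $S^+(S^-(\Lambda_0))$ is a stabilisation. Since $L\subset\Sigma$, the primitive $f$ of Step 1 restricts to a primitive of $e^t\alpha\vert_L$, so the Liouville form is exact on $L$, which is the assertion. I would pointedly not try to make $f$ constant on the negative end: in the flat model the Liouville form vanishes on each sheet through $p$, so $f$ equals the constant $f(q_i)$ on the $i$-th component of $H$, but $f(q_1)-f(q_2)$ is the nonzero (indeed positive) action of the double point $p$; as $L$ is connected this discrepancy persists for every choice of primitive, so condition (i) of \cite[Definition 1.1]{Ekhoka} genuinely fails for $L$ --- the phenomenon the example is built to exhibit.

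The main obstacle is Step 1: writing down the explicit immersed model with \emph{exactly one} double point and with positive Legendrian end \emph{literally} $S^+(S^-(\Lambda_0))$, retaining enough control of the immersion near $p$ to flatten it to the Weinstein model of Step 2 and enough control of $f$ to read off the action of $p$. Granting that, Steps 2--3 are essentially routine, the one delicate point being the standard but careful identification of the link of a transverse Lagrangian self-intersection --- with its induced contact structure, parametrisation and orientation --- with the Legendrian Hopf link $H$.
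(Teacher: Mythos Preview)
Your proposal is correct and follows essentially the same strategy as the paper: build an exact immersed Lagrangian disk with a single transverse double point and positive end $S^+(S^-(\Lambda_0))$, then excise a Darboux ball about the double point so that the link of the self-intersection becomes the negative end $H$. The paper realises your Step~1 via the generating-family elementary move of Theorem~\ref{thm:elementarymove} stacked on a birth move (the movie of Figure~\ref{fig:stab}) rather than via the Lagrangian projection, and packages your Step~2 as the standalone Theorem~\ref{thm:doublepoint}; otherwise the arguments coincide.
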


It is well known that in Chekanov-Eliashberg algebras of stabilisations the unit is a boundary.  However $H$ has augmentations (independently of the grading we choose for mixed chords) and thus the unit of its Chekanov algebra is not a boundary. This implies that the cobordism $L$ even though exact as Lagrangian submanifold cannot induce a DGA-map from $\mathcal{A}(S^+(S^-(\Lambda_0))$ to $\mathcal{A}(H)$. It will be obvious from the construction that $L$ does not satisfy the condition (\ref{item:3}) of the definition of exact Lagrangian cobordism.

In order to construct the explicit examples, we uses elementary cobordisms as defined in \cite{collarable} together with an additional elementary Lagrangian immersion provided by the following theorem: 

\begin{Thm}\label{thm:elementarymove}
  Suppose that the front projections of two Legendrian links $\Lambda^-$ and $\Lambda^+$ only differs by the local move of Figure \ref{fig:Imm} then there exists a Lagrangian immersion $L$ of a pair of pants in $\mathbb{R}\times\mathbb{R}^3$ with a single double point such that
  \begin{itemize}
\item $L\cap (-\infty,-T)\times M=(-\infty,-T)\times\Lambda^-$
  \item $L\cap (T,\infty)\times M=(T,\infty)\times\Lambda^+$
  \end{itemize}
\end{Thm}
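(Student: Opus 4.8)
The plan is to produce $L$ by a local construction near the region where the two fronts differ, glued to a trivial cylinder over the part of the link that does not move. Let $D\subset\mathbb{R}^2_{xz}$ be a small disk supporting the move of Figure~\ref{fig:Imm}, let $\pi\colon\mathbb{R}^3\to\mathbb{R}^2_{xz}$ be the front projection, and set $U=\pi^{-1}(D)$; by hypothesis $\Lambda^-$ and $\Lambda^+$ agree outside $U$ and meet $\partial U$ in the same finite set of points with the same $1$--jets. It then suffices to build, inside $\mathbb{R}\times U$, a Lagrangian immersion $L_0$ of a compact surface with corners, carrying a single transverse double point, which equals $(-\infty,-T]\times(\Lambda^-\cap U)$ for $t\le-T$, equals $[T,\infty)\times(\Lambda^+\cap U)$ for $t\ge T$, and equals $\mathbb{R}\times(\Lambda^\pm\cap\partial U)$ near the lateral boundary $\mathbb{R}\times\partial U$. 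Gluing $L_0$ to $\mathbb{R}\times(\Lambda^\pm\setminus U)$ and smoothing the corners with a cutoff — exactly as for the embedded elementary cobordisms of \cite{collarable} — then yields $L$ with the cylindrical ends claimed in the statement, and the type of the glued surface (a pair of pants) is read off from the way the move of Figure~\ref{fig:Imm} reconnects the strands through $D$.

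For $L_0$ I would work in the symplectisation $S(\mathbb{R}^3,\ker(dz-y\,dx))$, identified through \eqref{eq:1} with an open subset of $T^*(\mathbb{R}^*_+\times\mathbb{R})$, and write $L_0$ down as an explicit model: near the double point it is the standard transverse pair of Lagrangian planes $\mathbb{R}^2\times\{0\}\cup\{0\}\times\mathbb{R}^2\subset(\mathbb{R}^4,\omega_{\mathrm{std}})$, and away from it it interpolates by explicit formulas to the cylinders over $\Lambda^-\cap U$ and $\Lambda^+\cap U$. A convenient way to organise the formulas is as the trace (suspension, in the sense of \cite{chantraine_conc}) of a one--parameter family $\{\Lambda_\tau\}$ of Legendrian tangles in $U$ joining $\Lambda^-\cap U$ to $\Lambda^+\cap U$: writing the strands involved in the move as explicit graphs $z=\gamma_i(x;\tau)$ over the $x$--axis with slopes $y=\partial_x\gamma_i$, one arranges the family to be a genuine Legendrian isotopy of tangles for every $\tau$ except at one value $\tau_0$, at which exactly one pair of strands crosses transversally in $\mathbb{R}^3$; after the standard reparametrisation of $\tau$ by $t$, the trace is an immersed exact Lagrangian — the expansion of the $p_2$--direction under \eqref{eq:1} is what keeps it Lagrangian rather than merely isotropic — embedded away from $\tau_0$ and with a single transverse double point at $\tau_0$. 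The change in the number of strands responsible for the pair-of-pants Euler characteristic is supplied by letting the family also perform one pinch/saddle move of fronts; since a pinch move is realised by an \emph{embedded} elementary cobordism (again \cite{collarable}), it contributes no further double point, so $L_0$ carries exactly one. In the move of Figure~\ref{fig:Imm} the crossing and the pinch presumably coincide, the pinching band being forced to cross a sheet once because the two strands it joins lie in the ``wrong'' $y$--order, which is the geometric reason an immersed model is unavoidable here.

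It then remains to assemble and check: each piece of $L_0$ is Lagrangian by construction; the pieces meet along Legendrian sets over which everything is cylindrical, so the gluing preserves the Lagrangian condition and, for what it is worth, exactness of $e^t\alpha|_{L}$; the double-point set is exactly $\{\tau_0\}$ by the formulas; counting boundary components of the glued surface shows it is a pair of pants; and compatibility of $L_0$ with $\mathbb{R}\times(\Lambda^\pm\setminus U)$ is immediate since everything is a trivial cylinder near $\mathbb{R}\times\partial U$.

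The step I expect to be the main obstacle is writing the family $\{\Lambda_\tau\}$ — equivalently, the explicit model near the move — so that \emph{all} requirements hold at once: the only degeneration of the tangles is a single transverse self-crossing (no cusp births or deaths, no tangential contacts, no spurious second crossing), the pinch is inserted without creating extra self-intersections, and the ends close up exactly in the cylindrical collar form matching (\ref{item:1}) and (\ref{item:2}) rather than only asymptotically. Pinning this down with hands-on formulas, rather than invoking a genericity argument that would not control the \emph{number} of double points, is where the real work lies; the surrounding reduction and gluing are routine and parallel \cite{collarable}.
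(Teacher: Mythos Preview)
Your reduction to a local model in a Darboux ball, glued to trivial cylinders outside, is exactly how the paper frames the construction. The divergence is in how the local model is built. The paper does \emph{not} use the suspension/trace of a family of Legendrian tangles; it writes down a single generating family
\[
F(q,s,\eta)\;=\;q\Bigl(\tfrac{\eta^{3}}{3}-\rho(q,s)\,\eta\Bigr),
\]
where $\rho(q,\cdot)$ interpolates between a strictly positive function $\rho_-$ (two strands, one Reeb chord) and a function $\rho_+$ that changes sign (two pairs of cusps, chord gone). The immersion is then the image of the fibre-critical set $\{\eta^2=\rho\}$ under $(q,s)\mapsto(q,s,\partial_qF,\partial_sF)$, so the Lagrangian condition and the cylindrical ends come for free, and the double-point count reduces to a one-variable problem: double points occur only at $s=0$ and exactly at the critical points of (essentially) $q\mapsto q\,\rho(q,0)$, which one arranges to have a single maximum.

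Your proposed decomposition into (trace of a regular homotopy with one transverse self-crossing) composed with (an embedded pinch cobordism from \cite{collarable}) has a genuine gap: the embedded saddle goes the \emph{wrong way} for this. Move~4 of \cite{collarable} has the resolved picture ($\supset\ \subset$) at the \emph{negative} end and the chord at the \emph{positive} end, so you cannot place an embedded pinch above any cylinder and land on $\Lambda^+$ (which has the chord removed). A single crossing in $\mathbb{R}^3$ along the way does not fix this: such a crossing is a Reeb chord of length zero, and after it the Lagrangian-projection crossing has merely switched sign --- you still need to \emph{remove} it, which again is the immersed direction. Your parenthetical that ``the crossing and the pinch presumably coincide'' is in fact the heart of the matter, but once they coincide you are no longer composing two known pieces; you are building a genuinely new local model, and the trace formalism gives no handle on how many double points that model has.

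This is precisely what the generating-family approach buys: the saddle (topology change) and the double point are produced by one formula, and the number of double points is read off from the monotonicity properties of $\rho$. If you want to push your route through, the missing ingredient is an explicit function playing the role of $\rho$; once you write it down you will have essentially rediscovered the paper's argument.
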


\begin{figure}[!h]
  \centering
  \includegraphics[height=3cm]{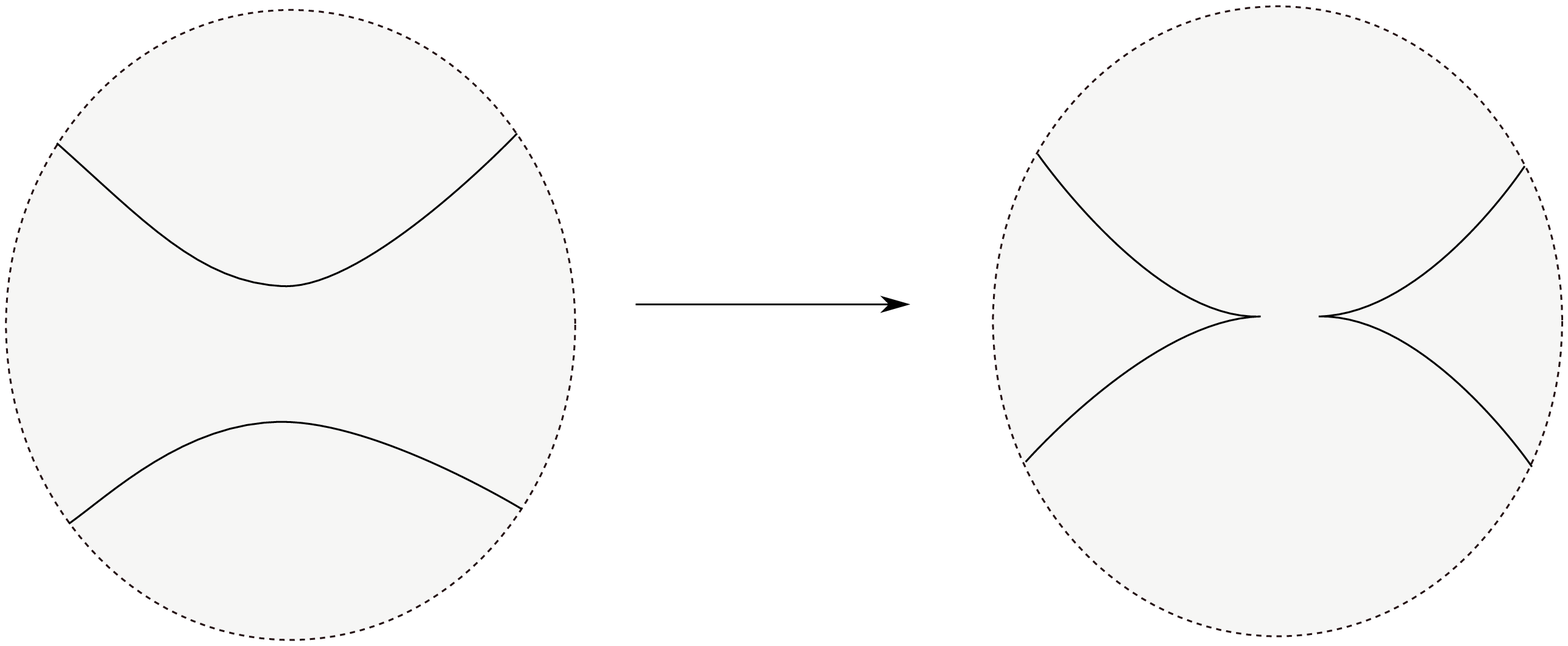}
\caption{Local move realisable by a Lagrangian immersion.}
  \label{fig:Imm}
\end{figure}

Note that the move of Figure \ref{fig:Imm} is the opposite move of move number $4$ of \cite[Figure 1]{collarable}. It can only be realised by an embedded Lagrangian cobordism in the direction opposite to the arrow. If one attempts to go in the other direction (as in Figure \ref{fig:Imm}) one creates a transverse double point as stated in Theorem \ref{thm:elementarymove}. 

In order to interpret Lagrangian immersions with a single double point and empty negative end with cobordisms from the Hopf link we use the following simple theorem:

 \begin{Thm}\label{thm:doublepoint}
   Let $i:\Sigma\rightarrow S(Y,\xi)$ be a Lagrangian immersion such that: 
   \begin{enumerate}
   \item $i$ has only one transverse double point $i(p_0)=i(p_1)=q$.
\item $i(\Sigma)\cap (T,\infty)\times Y=(T,\infty)\times \Lambda$ where $\Lambda$ is a Legendrian submanifold of $Y$.
\item $i(\Sigma)\cap (-\infty,-T)\times Y=\emptyset$
   \end{enumerate}
then there exists a Lagrangian cobordism $i':\Sigma'\rightarrow S(Y,\xi)$ where $\Sigma'=\Sigma\setminus \{p_0,p_1\}$ from the Legendrian Hopf link to $\Lambda$. Moreover if $i$ is exact then so is $i'$.
 \end{Thm}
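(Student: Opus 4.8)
The plan is to work in an explicit symplectic chart around the double point. Since $i$ is an immersion whose only failure of injectivity is the single transverse point $i(p_0)=i(p_1)=q$, a small neighbourhood of $q$ in $S(Y,\xi)$ meets $i(\Sigma)$ in exactly two embedded Lagrangian disks crossing transversely at $q$. By the Weinstein neighbourhood theorem for a transversely intersecting pair of Lagrangians (a parametric Darboux/Moser argument, using that any transverse pair of Lagrangian subspaces of $\mathbb{C}^2$ is $Sp$-equivalent to $(\mathbb{R}^2,i\mathbb{R}^2)$) there is a symplectomorphism $\Phi$ from such a neighbourhood $U$ onto a ball $B^4\subset(\mathbb{C}^2,\omega_0)$ centred at $0$, with $\Phi(q)=0$ and $\Phi\bigl(i(\Sigma)\cap U\bigr)=(\mathbb{R}^2\cup i\mathbb{R}^2)\cap B^4$.

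Next I would read off the geometry of the link of this singularity. Writing $\lambda_0=\tfrac12\sum_j(x_j\,dy_j-y_j\,dx_j)$, one has $\omega_0=d\lambda_0$ and $\mathbb{C}^2\setminus\{0\}\cong\mathbb{R}\times S^3$ with $\lambda_0=e^\tau\alpha_{S^3}$ for the standard contact form $\alpha_{S^3}$; moreover $\mathbb{R}^2$ and $i\mathbb{R}^2$ are Liouville cones, $\mathbb{R}^2\setminus\{0\}\cong\mathbb{R}\times C_1$ and $i\mathbb{R}^2\setminus\{0\}\cong\mathbb{R}\times C_2$, where $C_1=\{y=0,\ |x|=1\}$ and $C_2=\{x=0,\ |y|=1\}$. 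A one-line computation shows $\alpha_{S^3}$ vanishes on $C_1$ and on $C_2$, so each is a maximal-$\mathrm{tb}$, rotation $0$ Legendrian unknot, and the two complementary great circles $C_1,C_2$ form a Legendrian Hopf link $H'\subset(S^3,\alpha_{S^3})$. Using the contactomorphism $f$ fixed in the introduction together with the fact that a Legendrian isotopy extends to a Hamiltonian isotopy of the symplectisation, one identifies $H'$ with $H=\Lambda_0\cup(\Lambda_0+\varepsilon\tfrac{\partial}{\partial t})$. Hence, after shrinking $U$, $\Phi$ identifies $i(\Sigma)\cap(U\setminus\{q\})$ with $(-\infty,c)\times H$ for some $c$. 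I expect this paragraph to be the main obstacle: producing a single Darboux chart that simultaneously normalises both transverse Lagrangian sheets, and then matching the resulting Legendrian Hopf link with the fixed model $H$; once this local model is in place the rest is soft.

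Now I would globalise. Set $\Sigma'=\Sigma\setminus\{p_0,p_1\}$: near each puncture $\Sigma'$ is a half-open cylinder, carried by $i$ onto the conical collar just produced. A standard collar/Moser gluing, supported near the punctures and matching the Liouville forms $\lambda_0$ and $e^t\alpha$ (which differ by $d$ of a function on $U$), deforms $i\vert_{\Sigma'}$ to a proper Lagrangian immersion $i'$ that agrees with $i$ away from $U$, is embedded (the double point has been removed), is unchanged near the positive end $(T,\infty)\times\Lambda$, and is cylindrical, equal to $(-\infty,-T')\times H$, near the new end. This is the desired Lagrangian cobordism from the Legendrian Hopf link to $\Lambda$.

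Finally, exactness. Suppose $i$ is exact, i.e.\ $[i^*(e^t\alpha)]=0$ in $H^1(\Sigma;\mathbb{R})$. Deleting the two points only enlarges $H^1$ by the classes dual to small loops around $p_0$ and $p_1$; after the construction such a loop is isotopic in $\Sigma'$ to a slice $\{t\}\times(\text{component of }H)$ of the new cylindrical end, over which $\alpha$ vanishes, so the corresponding period of $(i')^*(e^t\alpha)$ is $0$. Therefore $[(i')^*(e^t\alpha)]=0$ in $H^1(\Sigma';\mathbb{R})$ and $i'$ is exact — though the primitive need not be constant, nor even take equal values on the two components of the new end, which is precisely the phenomenon this note is about.
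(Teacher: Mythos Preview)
Your argument is correct and follows the same underlying idea as the paper: normalise the transverse double point to the standard pair $\mathbb{R}^2\cup i\mathbb{R}^2$ in $\mathbb{C}^2$ via a Moser/Darboux argument, recognise the link of $0$ as the Legendrian Hopf link, and delete the point.

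The execution differs in one useful way. The paper works globally in $\mathbb{R}^4\simeq S(S^3,\xi_{std})$ from the start (using $\widetilde{f}$ when $Y=\mathbb{R}^3$): it first moves $q$ to $0$ by a Hamiltonian isotopy, then applies Moser's method to produce a \emph{compactly supported} Hamiltonian diffeomorphism $\phi$ of the ambient space with $\phi\circ i(\Sigma)\cap B=(\mathbb{R}^2\cup i\mathbb{R}^2)\cap B$. Because $\phi$ is a global Hamiltonian symplectomorphism supported away from the positive end, the cylindrical structure at $+\infty$ and exactness are preserved automatically, and since $\mathbb{R}^4\setminus\{0\}$ \emph{is} the symplectisation of $S^3$, the punctured ball is already literally a negative cylindrical end over $H$; no gluing is needed. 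Your route---a local Darboux chart $\Phi:U\to B^4$ followed by a collar/Moser step to make the new end cylindrical in $S(Y,\xi)$ and a separate cohomological check for exactness---also works, but the extra gluing and exactness verification are precisely what the paper's choice of a compactly supported ambient Hamiltonian buys for free. Your more explicit treatment of the exactness (computing the new periods around the punctures) is a nice addition and makes transparent why the primitive need not be constant on the two components of the new negative end.
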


In Section \ref{sec:lagr-immers-with} we give the proof of Theorem \ref{thm:doublepoint} and then construct the elementary cobordism of Theorem \ref{thm:elementarymove}. In Section \ref{sec:expl-lagr-cobord} we illustrate explicitly the necessity of (\ref{item:3}) of the definition of exact Lagrangian cobordism and prove Proposition \ref{sec:prop}. We also provide, similar to \cite[Question 8.10]{Ekhoka}, an example of a non-decomposable Lagrangian cobordism as defined in \cite{collarable}.

\textbf{Acknowledgements:} The existence of those cobordisms emanated from a discussion with Samuel Lisi whom I thank for leading to the construction of Theorem \ref{thm:doublepoint}. I also thank Paolo Ghiggini for explaining that the holomorphic curves of Figure \ref{fig:holom} can occur when negative ends are disconnected. Finally I thank Tobias Ekholm, Ko Honda et Tam\'as K\'alm\'an for reading and commenting a first draft of these constructions.

\section{Lagrangian immersions with a single double point.}
\label{sec:lagr-immers-with} We start by giving the proof of Theorem \ref{thm:doublepoint}.

\begin{proof}[Proof of Theorem \ref{thm:doublepoint}]
We assume that the target of $i$ is $\mathbb{R}^4$ (using $\widetilde{F}$ if necessary).

Since the Hamiltonian diffeomorphisms group of a connected symplectic manifold is transitive one can assume that $q=0$. Now a standard application of Moser's path method applied twice allows us to find a compactly supported Hamiltonian diffeomorphisms supported in a small Darboux ball $B=D^4_\varepsilon$ around $0$ such that $B\cap \phi\circ i(\Sigma)=B\cap(\mathbb{R}^2\cup i\cdot\mathbb{R}^2)$. In this case $\phi\circ i(\Sigma)\cap S^3_\varepsilon$ is the Legendrian Hopf link and  $B\cap \phi\circ i(\Sigma)\setminus\{0\}$ is the cylinder over $H$. Thus $\phi\circ i(\Sigma)\setminus\{0\}$ is a Lagrangian cobordism from $H$ to $\Lambda$. Furthermore Hamiltonian diffeomorphisms preserves the exactness properties. Since one can realise transverse intersection between isotropic submanifolds, $\phi\circ i(\Sigma)$ avoid the half-line $\{(t,0,t,0)\}$ and thus the proof is complete even when $Y=\mathbb{R}^3$.
 \end{proof}

We now turn our attention to Theorem \ref{thm:elementarymove}.  We give an explicit description of the elementary immersion which, together with Theorem \ref{thm:doublepoint}, will produce an example of a an exact Lagrangian cobordism where the end at $\infty$ is a stabilised knot whereas the end at $-\infty$ is the Hopf link (see Section \ref{sec:expl-lagr-cobord}).

\begin{proof}[Proof of Theorem \ref{thm:elementarymove}]
  We use generating families to produce our elementary immersion
  realising the local move of Figure \ref{fig:Imm}. We assume that the
  local picture are in a Darboux ball. We recall that we denote by
  $(q,s,p_1,p_2)$ the coordinates on
  $T^*(\mathbb{R}_+^*\times\mathbb{R})$

  The move is described by the generating family given by Figure
  \ref{fig:Gen_FamilyImm}. 

\begin{figure}[!h]
  \centering
  \includegraphics[height=3cm]{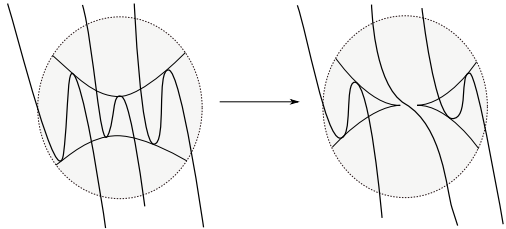}
  \caption{Generating family for the Lagrangian immersion.}
  \label{fig:Gen_FamilyImm}
\end{figure}

More explicitly, up to Legendrian isotopy, the left hand side of the picture is given by the
generating family $F_-(s,\eta)=\frac{\eta^3}{3}-\rho_-(s)\eta$ where
$\rho$ is a positive function such that $\rho_-^{\frac{3}{2}}$ is
smooth and $\rho_-'<0$ for $s<0$ and $\rho_-'>0$ for $s>0$ (this
ensure that the associated Legendrian sub-manifolds has only one Reeb
chord). Similarly the right hand side is given by a
generating family $F_+(s,\eta)=\frac{\eta^3}{3}-\rho_+(s)\eta$ where
$\rho_+$ is positive when $\vert s\vert>\frac{1}{2}$ and negative when
$\vert s\vert<\frac{1}{2}$, $\rho_+^{\frac{3}{2}}$ is smooth and $\rho_+'<0$ for $s<0$ and $\rho_+'>0$
for $s>0$. We also require that $\rho_-$ and $\rho_+$ coincide in a
neighbourhood $\pm 1$.

Let $\rho(q,s)$ be a smooth function such $\rho(q,s)=\rho_-(s)$ if
$q<\frac{1}{T}$ and $\rho(t,s)=\rho_+(s)$ if $q>T$. We assume that
$\frac{\partial \rho}{\partial s}$ is $0$ only when $s=0$. We claim
that $F(q,s,\eta)=q(\frac{\eta^3}{3}-\rho(q,s)\eta)$ is a generating
family for the desired immersion.

Explicitly the Lagrangian immersion
is: $$\Sigma=\{(q_0,s_0,p_1,p_2)\vert\exists \eta_0\text{ s.t. }
\eta_0^2=\rho(q_0,s_0),\; p_1=
\frac{\eta_0^3}{3}-\rho(q_0,s_0)\eta_0-q_0\frac{\partial\rho}{\partial
  q}\cdot\eta_0,\; p_2=-q_0\frac{\partial\rho}{\partial
  s}\cdot\eta_0\}.$$

Note that from the choices we made, this cobordism stays in the
symplectisation of the Darboux ball. Hence one can use this to create
the local moves using Lagrangian immersion.

In order to understand the double points of this immersion note that
such a double point can only occur at $p_2=0$ which corresponds to
$\frac{\partial\rho}{\partial s}=0$ which by assumption only happens
at $s=0$. In this situation one has a double point if
$\frac{\eta_0}{3}-\rho(q,0)\eta-\frac{\partial\rho}{\partial
  q}\cdot\eta=0$. Under the assumption that the function
$q\cdot\rho(q,0)$ has only one local maximum such situation only
arises once. One can achieve this condition because as $q$ increases
the Reeb chord disappears (note that we cannot do better than
that as near $0$ this function is forced to increase because of the expansion of the cotangent direction in Equation \eqref{eq:1}).
\end{proof}

\section{Explicit Lagrangian cobordisms}
\label{sec:expl-lagr-cobord}

\subsection{A non-decomposable exact Lagrangian cobordism with compatible generating family}
\label{sec:non-decomp-exact}

We consider the following coordinates on $S^2=\{(y,p)\vert y^2+\vert p\vert^2=1\}\subset\mathbb{R}\times\mathbb{R}^2$.

Whitney's Lagrangian immersion is the map
$$\begin{array}{cccc}
  W: & S^2 &\rightarrow & T^*(\mathbb{R}_+^*\times\mathbb{R})\subset \mathbb{C}^2 \\
& (y,z) & \rightarrow & (1+iy)z.
\end{array}$$

It is an exact Lagrangian sub-manifold of $\mathbb{C}^2$ with a single transverse double point at $0$ whose Legendrian lift is the standard ``flying saucer''. Applying Theorem \ref{thm:doublepoint} one get an exact Lagrangian cobordism for $H$ to $\emptyset$ (in other word an exact concave filling of $H$). The projection to $\mathbb{R}$ of the Lagrangian cobordism has necessarily a maximum this imply that this exact Lagrangian cobordism is not decomposable in the sense of \cite{collarable} and \cite{Ekhoka}. Furthermore it is easy to see that this immersion admits a compatible generating family (as in \cite{GFcob}). The generating family is actually build similarly to the one for the standard cap of a trivial Legendrian knot appearing in \cite{collarable} (the double point arise when the variation of the length of the Reeb chords is $0$). Note that an example of a non-decomposable exact Lagrangian cobordism appears also in \cite[Section 8]{Ekhoka} referring to an example of \cite{Sauvaget}.

\subsection{Example of non-composable exact Lagrangian cobordism}
\label{sec:example-non-comp}

Using elementary cobordisms from \cite[Figure 1]{collarable} we construct an exact Lagrangian filling of the Hopf link. The local moves are described in Figure \ref{fig:hopf}. Note that topologically the cobordism is a cylinder and that the non-trivial homology class in $H_1$ is represented by a Legendrian knot (i.e. on which the Liouville form vanishes), it is thus exact.
\begin{figure}[!h]
\labellist
\small\hair 2pt
\pinlabel {$\emptyset$} [tl] at 190 -10
\endlabellist  
\centering
  \includegraphics[height=5cm]{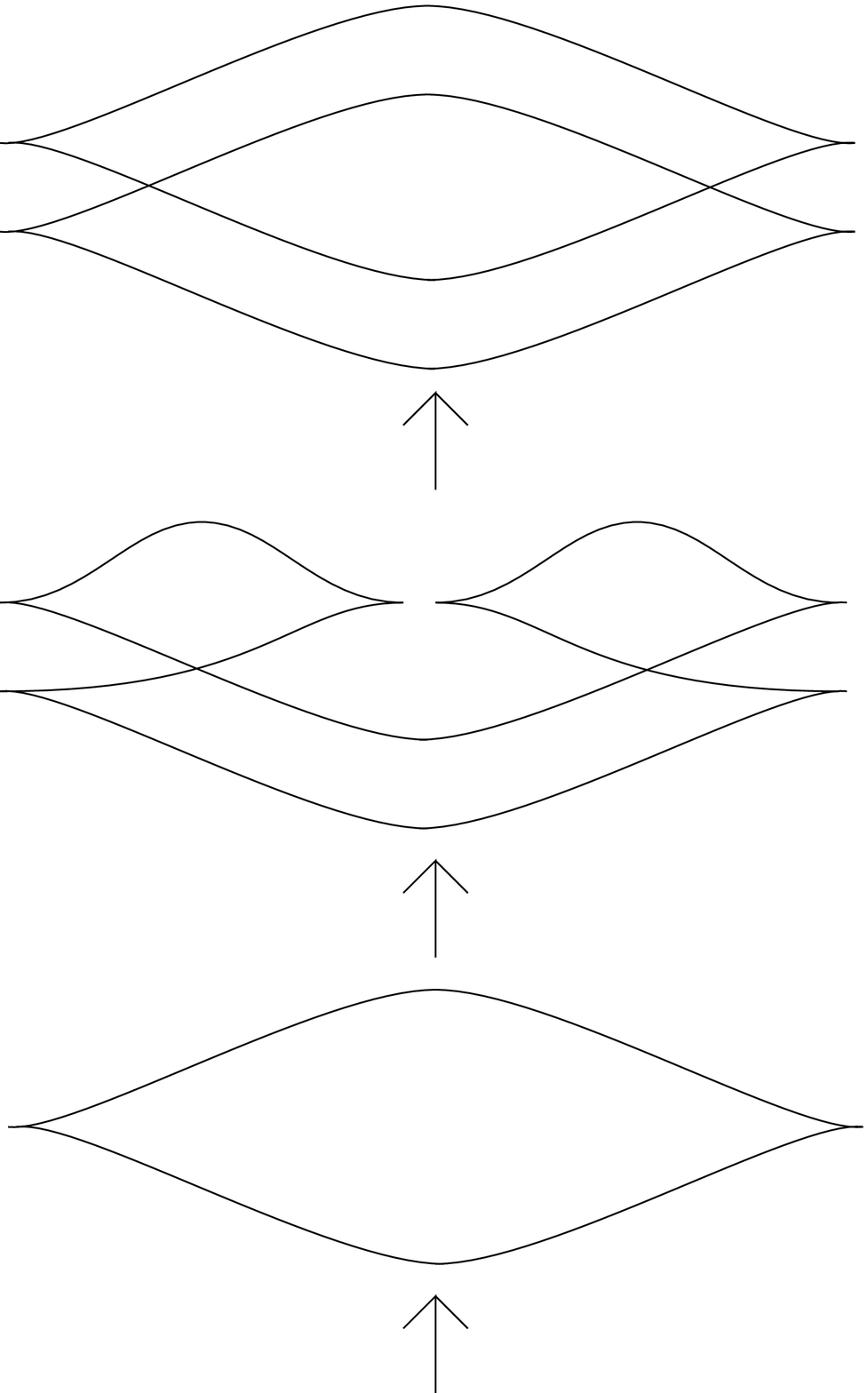}
  \caption{An exact filling of the Hopf link.}
  \label{fig:hopf}
\end{figure}

Together with the cobordism of Section \ref{sec:non-decomp-exact} we get two exact Lagrangian cobordisms: one from $\emptyset$ to $H$ and another one from $H$ to $\emptyset$. Should we be able to glue those cobordisms in an exact fashion one would get a closed exact Lagrangian torus of $\mathbb{C}^2$ which contradicts Gromov's Theorem in \cite{Gromov}.

\subsection{An exact Lagrangian cobordism to a stabilised Legendrian knot}
\label{sec:an-exact-lagrangian}

In order to construct the example of Proposition \ref{sec:prop} we will use Theorem \ref{thm:doublepoint} on a Lagrangian immersion with one double point. We construct it using the local move of Theorem \ref{thm:elementarymove}.

\begin{proof}[Proof of Proposition \ref{sec:prop}]
  Figure \ref{fig:stab} show the movie a Lagrangian immersion with one double point and empty negative ends (using move $5$ of \cite[Figure 1]{collarable}).
  \begin{figure}[!h]
\labellist
\small\hair 2pt
\pinlabel {$\emptyset$} [tl] at 220 -10
\endlabellist  
    \centering
    \includegraphics[height=5cm]{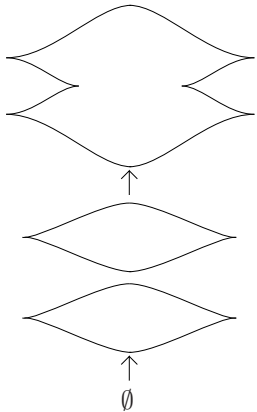}
    \caption{Movie of a Lagrangian immersion with a single double
      point.}
    \label{fig:stab}
  \end{figure}

  The domain of the immersion is topologically a disk (hence it is
  exact). As it has only one transverse double point we can apply Theorem
  \ref{thm:doublepoint}. This leads to an oriented Lagrangian
  cobordism from the Legendrian Hopf link to the stabilised Legendrian
  knot on which the Liouville form is exact. 
\end{proof}

\bibliographystyle{amsplain}
 \bibliography{Bibliographie_en}

\end{document}